\newtheorem{lemma}{\sc Lemma}
\newtheorem{theorem}[lemma]{\sc Theorem}
\newtheorem{corollary}[lemma]{\sc Corollary}
\newtheorem{remark}{\sc Remark}
\newtheorem{assumption}{\sc Assumption}
\newtheorem{definition}{\sc Definition}
\renewcommand{\matrix}[2]{\left[\begin{array}{#1} #2 \end{array}\right]}
\DeclareMathOperator*{\argmin}{arg\,min}
\DeclareMathOperator*{\diag}{diag}
\DeclareMathOperator*{\trace}{Tr}
\renewcommand{\footnoterule}{%
  \kern -7pt
  \hrule width 0.3\textwidth height .5pt
  \kern 2pt
}
\title{Quadratic Gaussian Privacy Games}
\author{Farhad Farokhi, Henrik Sandberg, Iman Shames, and Michael Cantoni\thanks{F.~Farokhi, I.~Shames, and M.~Cantoni are with the Department of Electrical and Electronic Engineering, The University of Melbourne, Parkville, Victoria 3010, Australia. Their work was supported by a McKenzie Fellowship, the Australian Research Council (LP130100605), and Rubicon Water~Pty~Ltd. Emails: \{ffarokhi,ishames,cantoni\}@unimelb.edu.au}\thanks{H. Sandberg is with the ACCESS Linnaeus Center, School of Electrical Engineering, KTH Royal Institute of Technology, SE-100 44 Stockholm, Sweden. Email: hsan@kth.se}}
\begin{document}

\maketitle

\begin{abstract} A game-theoretic model for analysing the effects of privacy on strategic communication between agents is devised. In the model, a sender wishes to provide an accurate measurement of the state to a receiver while also protecting its private information (which is correlated with the state) private from a malicious agent that may eavesdrop on its communications with the receiver. A family of nontrivial equilibria, in which the communicated messages carry information, is constructed and its properties are studied. 
\end{abstract}

\section{Introduction}
Modern infrastructures, such as smart grids and intelligent transportation systems, are typically composed of smaller entities that cooperate in providing services whilst competing to generate revenue. These entities wish to retain a certain level of autonomy and privacy, which renders the task of guaranteeing a reasonable level of performance arduous. A new framework is required for modelling, understanding, and mitigating the challenges arising from the need for privacy and autonomy. To this aim, in this paper, we develop a game-theoretic setup for modelling and analysing the effects of privacy on strategic\footnote{The term ``strategic'' in ``strategic communication'' refers to that the participants (the sender, the receiver, the eavesdropper, etc) can identify their objectives and find the best policy for achieving these objectives in response to the actions of the other parties. } communication.

Specifically, we consider strategic communication between a sender and a receiver when a malicious agent can eavesdrop on their transmitted messages. The sender has the intention to provide accurate measurements of the state to the receiver, however, at the same time, it would like to hide its private information (which is correlated to the state-to-be-estimated) from the malicious agent\footnote{Note that the receiver and the malicious agent need not be separate entities. For instance, in many cases, we would like to provide some information to for-profit organizations in return for services, however, we wish to retain the rest of our personal data private to avoid being scammed through, among various means, targeted advertisement.}. We present a game-theoretic framework for modelling the interaction between these agents. Subsequently, we define an equilibrium as a tuple of policies from which no one has any incentive to unilaterally deviate. We study the properties of the equilibrium as function of the privacy ratio, a parameter that reflects the balance
 between the sender's intention for providing an accurate measurement of the state to the receiver and its desire for privacy against the malicious agent. 

The problem that is considered in this paper is partly related to studies in differential privacy and its application in  control and estimation~\cite{Dwork2006,Dwork2008,6606817,Huang2014Differential}.  In those studies, the communicated messages are typically perturbed by noise to ensure differential privacy. In this paper, we take a completely different approach to model and to understand the balance between privacy and the underlying objective of communication, that is, providing useful information to a less-informed third party. This framework results in different insights. For instance, in turns out that, in some cases, we do not need to add noise to the communicated signals at the equilibrium of the introduced privacy game. Furthermore, this paper provides an average-type privacy whereas the approach in the differential privacy literature gives a worst-case analysis (using information-theoretic tools one can bound the amount of the total leaked information).

We should note that the model and the results in this paper are, particularly, related to the cheap-talk literature~\cite{crawford1982strategic,Sobel2009signaling,farrell1996cheap}. A cheap-talk game is a strategic communication game in which well-informed players transmits messages to a receiver that makes a final decision regarding social welfare of all the players. Recently, these results were utilized in~\cite{6859123} to develop a framework for strategic communication between self-interested sensors and an estimator. In this paper, we utilize those ideas to develop a specific family of cheap-talk games in which the conflict-of-interest between sensors and the receiver is caused by the desire for privacy. Subsequently, this family of games is analysed using the tools developed in~\cite{6859123}.

The rest of the paper is organized as follows. In Section~\ref{sec:problem}, we introduce the privacy game and define the equilibrium for the game. We determine a family of equilibria and study its properties in Section~\ref{sec:results}. Finally, we present a numerical example in Section~\ref{sec:example} and conclude the paper in Section~\ref{sec:conclusions}.

\begin{figure}[t]
\centering
\includegraphics[width=0.8\columnwidth]{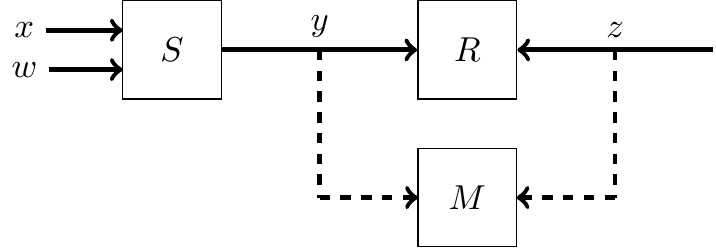}
\caption{\label{figure:communication_structure} The communication structure between the strategic sensor $S$, other sources of information $z$, the malicious agent $M$, and the receiver $R$.}
\end{figure}


\section{Problem Formulation} \label{sec:problem}
We consider the communication structure (see Fig.~\ref{figure:communication_structure}) in which a sender $S$ transmits a message $y\in\mathbb{R}^{n_y}$ over a real channel to a receiver $R$. The sender has the best intention to provide an accurate measurement of a variable $x\in\mathbb{R}^{n_x}$ to the receiver. However, it also wishes to keep its private information $w\in\mathbb{R}^{n_w}$ hidden from any malicious agent who can tap the channel and eavesdrop on its transmitted message. This is certainly not a trivial task if $x$ and $w$ are correlated (as, in such cases, measurements of $x$ carry ``some information'' about the realization of $w$). We also assume that the receiver and the malicious agent can, in addition to the message provided by the sender,  access other sources of information (e.g., other senders, physical sensors, etc) to receive the measurement $z\in\mathbb{R}^{n_z}$. Throughout the paper, we use the notation $V_{xy}$ to denote $\mathbb{E}\{xy^\top\}$ for any two random variables $x$ and $y$.  We make the following assumption.

\begin{assumption} \label{assum:Gaussian} $x,w,z$ are jointly distributed Gaussian random variables with zero mean and a positive-definite joint covariance matrix.
\end{assumption}

The timing of the game is as follows. First, the random variables $x,w,z$ are realized. Then, the sender determines and transmits its message $y$. Note that, at this point, we assume that the sender has access to the realization of $z$, however, as will see, at the equilibrium, the sender does not utilize the realization (but uses the probability distribution). Subsequently, the receiver constructs its best estimate of the state $x$. At the same time, a malicious agent constructs the best estimate of the private information $w$ of the sender from the available information $y$ and $z$. Finally, all the cost functions are realized. Note that all the distributions are assumed to be common knowledge in the game (i.e., each player knows them, each player knows that the other players know them, and so on). Let us introduce each player in the game properly.

\subsection{Receiver}
The receiver constructs its best estimate of the state, denoted by $\hat{x}\in\mathbb{R}^{n_x}$ according to the conditional probability density distribution $\psi(\cdot|y,z)$. Therefore, for all Lebesgue-measurable sets $\mathcal{X}\subseteq \mathbb{R}^{n_x}$, we have
\begin{align*}
\mathbb{P}\{\hat{x}\in\mathcal{X}|y,z\}=\int_{\xi\in\mathcal{X}} \psi(\xi|y,z) \mathrm{d}\xi.
\end{align*}
Let $\Psi$ denote the set of all such conditional probability density distributions (i.e., the set of all non-negative Lebesgue-measurable functions with integral over $\mathbb{R}^{n_x}$ equal to one). The receiver is interested in having a small estimation error for the state $\mathbb{E}\{\|x-\hat{x}\|_2^2\}$. 


\subsection{Malicious Agent}
 The malicious agent constructs its best estimate $\hat{w}$ according to the conditional probability density distribution $\phi(\cdot|y,z)$. Therefore, similarly, for all Lebesgue-measurable sets $\mathcal{W}\subseteq \mathbb{R}^{n_w}$, we get
\begin{align*}
\mathbb{P}\{\hat{w}\in\mathcal{W}|y,z\}=\int_{\xi\in\mathcal{W}} \phi(\xi|y,z) \mathrm{d}\xi.
\end{align*}
The set of all such conditional distributions is denoted by $\Phi$. What distinguishes the sets $\Psi$ and $\Phi$ is the dimensions of the vectors $\hat{x}$ and $\hat{w}$.  The cost function of the malicious agent is $\mathbb{E}\{ \|w-\hat{w}\|_2^2\}$. 
 

\subsection{Sender}
The sender employs a conditional probability density distribution $\gamma(\cdot|x,w)$ for all $x\in\mathbb{R}^{n_x}$ and $w\in\mathbb{R}^{n_w}$, which gives that 
\begin{align*}
\mathbb{P}\{y\in\mathcal{Y}|x,w\}=\int_{\xi\in\mathcal{Y}} \gamma(\xi|x,w) \mathrm{d}\xi,
\end{align*}
where $\mathcal{Y}\subseteq \mathbb{R}^{n_y}$ is any Lebesgue-measurable set. The set of all such conditional probability density distributions is denoted by $\Gamma$. The cost of the sender is $\mathbb{E}\{\|x-\hat{x}\|_2^2\}-\delta \mathbb{E}\{ \|w-\hat{w}\|_2^2\}$, where $\delta\in\mathbb{R}_{\geq 0}$ is the privacy ratio. As $\delta$ increases, the sender puts more emphasis on keeping its privacy instead of helping the receiver by proving a useful message for the state estimation to the receiver.

\subsection{Equilibrium}
With these definitions at hand, we are now ready to define the equilibrium of the game. However, first, we need to define some useful notations. In that what follows, we use $\Psi^\Gamma$ and $\Phi^\Gamma$ to denote the sets of all mappings from $\Gamma$ to, respectively, $\Psi$ and $\Phi$. Hence, for any $\psi\in\Psi^\Gamma$ and any given $\gamma$, $\psi(\gamma)$ is a conditional probability density distribution in $\Psi$. We use the notation $[\psi(\gamma)](\hat{x}|y,z)$ to distinguish between the arguments of $\psi$ and $\psi(\gamma)$. The same can be said about the entries of $\Phi^\Gamma$.


\begin{definition} \label{def:eq} A tuple $(\psi^*,\phi^*,\gamma^*)\in\Psi^\Gamma\times \Phi^\Gamma\times \Gamma$ is an equilibrium of the privacy game with privacy ratio $\delta$ if 
\begin{subequations}
\begin{align}
\psi^*&\in\argmin_{\psi\in\Psi^\Gamma} \mathbb{E}\bigg\{\int_{\hat{x}\in\mathbb{R}^{n_x}}\int_{y\in\mathbb{R}^{n_y}}  \|x-\hat{x}\|_2^2[\psi(\gamma^*)](\hat{x}|y,z)\nonumber\\
&\hspace{1.5in}\times\gamma^*(y|x,w)\mathrm{d}y\mathrm{d}\hat{x}\bigg\},\label{eqn:psi*} \\
\phi^*&\in\argmin_{\phi\in\Phi^\Gamma} \mathbb{E}\bigg\{\int_{\hat{w}\in\mathbb{R}^{n_w}}\int_{y\in\mathbb{R}^{n_y}} \|w-\hat{w}\|_2^2[\phi(\gamma^*)](\hat{w}|y,z)\nonumber\\
&\hspace{1.5in}\times\gamma^*(y|x,w)\mathrm{d}y\mathrm{d}\hat{w}\bigg\},\label{eqn:phi*} \\
\gamma^*&\in\argmin_{\gamma\in\Gamma} \mathbb{E}\bigg\{ \int_{\hat{x}\in\mathbb{R}^{n_x}}\int_{y\in\mathbb{R}^{n_y}}  \|x-\hat{x}\|_2^2[\psi^*(\gamma)](\hat{x}|y,z)\nonumber\\
&\hspace{1.5in}\times\gamma(y|x,w)\mathrm{d}y\mathrm{d}\hat{x}\nonumber\\
&\hspace{.48in}-\delta \hspace{-.05in}\int_{\hat{w}\in\mathbb{R}^{n_w}}\int_{y\in\mathbb{R}^{n_y}} \|w-\hat{w}\|_2^2[\phi^*(\gamma)](\hat{w}|y,z)\nonumber\\
&\hspace{1.5in}\times\gamma(y|x,w)\mathrm{d}y\mathrm{d}\hat{w} \bigg\}. \label{eqn:gamma*}
\end{align}
\end{subequations}
\end{definition} 


In some cases, the policies of the malicious agent and the receiver are set by the same entity. For instance, we may want to provide some information to a for-profit organization in return for services, however, we wish to retain the rest of our personal data private to avoid being scammed through targeted advertisement from the same organization. The following definition capture such scenarios. 

\begin{definition} \label{def:eq:coalition} A tuple $(\psi^*,\phi^*,\gamma^*)\in\Psi^\Gamma\times \Phi^\Gamma\times \Gamma$ is an equilibrium of the privacy game with privacy ratio $\delta$ if $\gamma^*$ satisfies~\eqref{eqn:gamma*} and
\begin{align}
(\psi^*&,\phi^*)\in\hspace{-.15in}\argmin_{(\psi,\phi)\in\Psi^\Gamma\times \Phi^\Gamma}\hspace{-.12in} \mathbb{E}\bigg\{\int_{\hat{x}\in\mathbb{R}^{n_x}}\int_{y\in\mathbb{R}^{n_y}}\hspace{-.2in}  \|x-\hat{x}\|_2^2[\psi(\gamma^*)](\hat{x}|y,z)\nonumber\\
&\hspace{1.5in}\times\gamma^*(y|x,w)\mathrm{d}y\mathrm{d}\hat{x}\nonumber\\
&\hspace{0.7in}+\vartheta\int_{\hat{w}\in\mathbb{R}^{n_w}}\int_{y\in\mathbb{R}^{n_y}}\hspace{-.2in} \|w-\hat{w}\|_2^2[\phi(\gamma^*)](\hat{w}|y,z)\nonumber\\
&\hspace{1.5in}\times\gamma^*(y|x,w)\mathrm{d}y\mathrm{d}\hat{w}\bigg\} \label{eqn:psi*phi*}
\end{align}
for a given constant $\vartheta>0$.
\end{definition}

\begin{theorem} \label{tho:equivalence} A tuple $(\psi^*,\phi^*,\gamma^*)\in\Psi^\Gamma\times \Phi^\Gamma\times \Gamma$ is an equilibrium of the privacy game with privacy ratio $\delta$ in the sense of Definition~\ref{def:eq:coalition} if and only if it is an equilibrium in the sense of Definition~\ref{def:eq}. 
\end{theorem}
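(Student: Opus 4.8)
The plan is to exploit that condition~\eqref{eqn:gamma*} on $\gamma^*$ is identical in Definitions~\ref{def:eq} and~\ref{def:eq:coalition}, so the entire theorem reduces to showing that, with the channel $\gamma^*$ held fixed, the pair of separate optimality conditions~\eqref{eqn:psi*}--\eqref{eqn:phi*} is equivalent to the single joint condition~\eqref{eqn:psi*phi*}.

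Write $J_R(\psi)$ for the expected integrated receiver cost $\mathbb{E}\{\int_{\hat x}\int_y \|x-\hat x\|_2^2[\psi(\gamma^*)](\hat x|y,z)\gamma^*(y|x,w)\,\mathrm{d}y\,\mathrm{d}\hat x\}$ and $J_M(\phi)$ for the analogous malicious-agent cost involving $\|w-\hat w\|_2^2$ and $[\phi(\gamma^*)]$. With $\gamma^*$ fixed, the objective in~\eqref{eqn:psi*phi*} equals $J_R(\psi)+\vartheta J_M(\phi)$; it is \emph{separable}, the $\psi$-part not depending on $\phi$ and the $\vartheta J_M$-part not on $\psi$, and it is minimized over the product set $\Psi^\Gamma\times\Phi^\Gamma$. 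Meanwhile~\eqref{eqn:psi*} reads $\psi^*\in\argmin_{\psi\in\Psi^\Gamma}J_R(\psi)$ and~\eqref{eqn:phi*} reads $\phi^*\in\argmin_{\phi\in\Phi^\Gamma}J_M(\phi)$. Note that $J_R,J_M\geq 0$, hence both are bounded below.

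The core step is the elementary fact that, for a separable objective $J_R(\psi)+\vartheta J_M(\phi)$ with $\vartheta>0$ on a product domain, $(\psi^*,\phi^*)$ is a joint minimizer if and only if $\psi^*$ minimizes $J_R$ and $\phi^*$ minimizes $J_M$ (the positive factor $\vartheta$ being irrelevant to the latter). For the ``only if'' direction I would argue by contradiction: were $\psi^*$ not a minimizer of $J_R$, some $\psi'\in\Psi^\Gamma$ would satisfy $J_R(\psi')<J_R(\psi^*)$, so $(\psi',\phi^*)$ would strictly improve $J_R(\cdot)+\vartheta J_M(\cdot)$, contradicting joint optimality of $(\psi^*,\phi^*)$; the argument for $\phi^*$ is symmetric. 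For the ``if'' direction, individual optimality of $\psi^*$ and $\phi^*$ gives $J_R(\psi)+\vartheta J_M(\phi)\geq J_R(\psi^*)+\vartheta J_M(\phi^*)$ for all $(\psi,\phi)$, i.e.\ joint optimality. Combining this equivalence with the shared condition~\eqref{eqn:gamma*} proves the theorem; as a byproduct it shows the particular value of $\vartheta>0$ is immaterial.

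The argument is genuinely short; the two points I would be most deliberate about are: (i) treating each $\argmin$ as the \emph{set} of minimizers rather than presuming uniqueness, so that the claim is an equivalence of set memberships; and (ii) not assuming the infima of $J_R$, $J_M$ are attained — the contradiction argument invokes only the defining property of ``not being a minimizer'' (existence of a strictly better competitor), so no compactness or attainment hypothesis is needed. I would also flag explicitly that strict positivity of $\vartheta$ is essential: at $\vartheta=0$ the $\phi$-component of~\eqref{eqn:psi*phi*} would be unconstrained and the equivalence with~\eqref{eqn:phi*} would fail.
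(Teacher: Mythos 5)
Your proof is correct and follows essentially the same route as the paper's: both rest on the observation that the objective in~\eqref{eqn:psi*phi*} is separable in $\psi$ and $\phi$ over the product domain $\Psi^\Gamma\times\Phi^\Gamma$, so joint minimization decouples into~\eqref{eqn:psi*} and~\eqref{eqn:phi*}. You merely spell out the elementary separability lemma (and the role of $\vartheta>0$) more explicitly than the paper does.
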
 

\begin{proof} The first term under expectation in~\eqref{eqn:psi*phi*} is only a function of $\psi$ and the second term is only a function of $\phi$. Therefore, we can separate this optimization problem into two parts as in~\eqref{eqn:psi*} and~\eqref{eqn:phi*}.
\end{proof}

Theorem~\ref{tho:equivalence} shows that there is no loss of generality in assuming that the receiver and the malicious agent are two separate entities as in Definition~\ref{def:eq}. 

\begin{remark}
The equilibrium in Definition~\ref{def:eq:coalition} (and equivalently that of Definition~\ref{def:eq}) is a Stackelberg equilibrium rather than a Nash equilibrium. This is because, in the definition of a Nash equilibrium, the players are not allowed to change their policy (they select one and fix it in response to all that the others can do). However, in this definition, clearly, both the receiver and the malicious agent are able to set their policies (i.e. estimate generating conditional densities $\psi(\gamma)$ and $\phi(\gamma)$) in response to the selected policy of the sender $\gamma$. 
\end{remark}

In the next section, we construct a family of privacy-game equilibria.

\begin{figure*}[!t]
\begin{align} 
\mathbb{E}\{\|x-\hat{x}\|_2\}-\delta\mathbb{E}&\{\|w-\hat{w}\|_2\}=
\trace(V_{xx}-V_{xz}V_{zz}^{-1}V_{zx})-\delta \trace(V_{ww}-V_{wz}V_{zz}^{-1}V_{zw}) \nonumber \\
&+\trace\left(
\matrix{c}{V_{xy} \\ V_{wy} \\ V_{zy} }^\top  
\matrix{ccc}{ 
-I & 0 & V_{xz}V_{zz}^{-1} \\
0 & \delta I &  -\delta V_{wz}V_{zz}^{-1} \\
V_{zz}^{-1} V_{zx}& -\delta V_{zz}^{-1}V_{zw} & -V_{zz}^{-1}(V_{zx}V_{xz}-\delta V_{zw}V_{wz} )V_{zz}^{-1}
}
\matrix{c}{V_{xy} \\ V_{wy} \\ V_{zy} } \right).
\label{eqn:longeqn:proof:1}
\end{align}
\hrule
\end{figure*}

\section{Main Results} \label{sec:results}
As any other cheap-talk game, this game posses an array of non-informative trivial equilibria, which are known as \textit{babbling equilibria} in the economics literature~\cite{Sobel2009signaling}. The next theorem constructs these babbling equilibria in the privacy game in line with Definition~\ref{def:eq}.

\begin{theorem} \label{tho:babbling} Let $\gamma^*(y|x,w)=\rho_y(y)$, where $\rho_y(\cdot)$ is an arbitrary probability density distribution. Moreover, let $\psi^*$ and $\phi^*$ be, respectively, selected such that 
$\hat{x}=\mathbb{E}\{x|z\}$ and $\hat{w}=\mathbb{E}\{w|z\}$ almost surely. Then, $(\psi^*,\phi^*,\gamma^*)$ is an equilibrium of the privacy game with any privacy ratio.
\end{theorem}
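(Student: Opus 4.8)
The plan is to verify the three optimality conditions~\eqref{eqn:psi*},~\eqref{eqn:phi*}, and~\eqref{eqn:gamma*} of Definition~\ref{def:eq} directly for the proposed tuple. The structural fact that drives everything is that the babbling policy $\gamma^*(y|x,w)=\rho_y(y)$ makes the transmitted message $y$ statistically independent of the triple $(x,w,z)$; this is exactly what collapses all the conditional expectations that appear below.

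First I would treat the receiver. For any $\psi\in\Psi^\Gamma$, the inner double integral in~\eqref{eqn:psi*} evaluated at $\gamma^*$ is just the mean-squared error $\mathbb{E}\{\|x-\hat x\|_2^2\}$ of a (possibly randomized) estimator $\hat x$ whose conditional law given $(y,z)$ is $[\psi(\gamma^*)](\cdot|y,z)$. By the standard conditioning/bias--variance argument, $\mathbb{E}\{\|x-\hat x\|_2^2\}\geq \mathbb{E}\{\|x-\mathbb{E}\{x|y,z\}\|_2^2\}$ for every such estimator, with equality when $\hat x=\mathbb{E}\{x|y,z\}$ almost surely. Since $y\perp(x,z)$ under $\gamma^*$, we have $\mathbb{E}\{x|y,z\}=\mathbb{E}\{x|z\}$, which under Assumption~\ref{assum:Gaussian} is the linear estimator with error covariance $V_{xx}-V_{xz}V_{zz}^{-1}V_{zx}$; hence the choice $\hat x=\mathbb{E}\{x|z\}$ attains the minimum and~\eqref{eqn:psi*} holds. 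The argument for the malicious agent and~\eqref{eqn:phi*} is word-for-word the same with $w$ in place of $x$, yielding $\mathbb{E}\{w|y,z\}=\mathbb{E}\{w|z\}$.

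The substantive condition is~\eqref{eqn:gamma*}. Here the phrase ``$\psi^*$ and $\phi^*$ selected such that $\hat x=\mathbb{E}\{x|z\}$ and $\hat w=\mathbb{E}\{w|z\}$ a.s.'' must be read as: these maps discard the message for \emph{every} $\gamma\in\Gamma$, i.e.\ $[\psi^*(\gamma)]$ concentrates on $\mathbb{E}\{x|z\}$ and $[\phi^*(\gamma)]$ on $\mathbb{E}\{w|z\}$ for all $\gamma$, not merely at $\gamma^*$. Granting this, for an arbitrary deviation $\gamma\in\Gamma$ the sender's objective in~\eqref{eqn:gamma*} equals $\mathbb{E}\{\|x-\mathbb{E}\{x|z\}\|_2^2\}-\delta\,\mathbb{E}\{\|w-\mathbb{E}\{w|z\}\|_2^2\}=\trace(V_{xx}-V_{xz}V_{zz}^{-1}V_{zx})-\delta\,\trace(V_{ww}-V_{wz}V_{zz}^{-1}V_{zw})$ (the constant terms already visible in~\eqref{eqn:longeqn:proof:1}), which does not depend on $\gamma$ at all. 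Therefore every $\gamma$, and in particular $\gamma^*=\rho_y$, is a minimizer, so~\eqref{eqn:gamma*} holds for any privacy ratio $\delta\geq 0$. Combining the three items shows $(\psi^*,\phi^*,\gamma^*)$ is an equilibrium in the sense of Definition~\ref{def:eq}.

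I expect the only real obstacles to be bookkeeping rather than genuine mathematics: (i) stating precisely that the conditional-mean estimator beats \emph{every} randomized estimator in $\Psi$, and that a degenerate (Dirac) conditional law is admissible as a limiting element of $\Psi$ so that the ``almost surely'' phrasing makes sense; and (ii) being explicit that $\psi^*\in\Psi^\Gamma$ and $\phi^*\in\Phi^\Gamma$ are the \emph{constant} maps returning the $z$-only estimators, which is what makes the sender's deviations impotent. Both points are routine once spelled out, so the proof should be short.
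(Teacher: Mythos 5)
Your proof is correct and follows essentially the same route as the paper's: independence of $y$ from $(x,w,z)$ under babbling makes the $z$-only conditional means optimal for the receiver and the malicious agent, and once both ignore the message the sender's objective is constant in $\gamma$, so babbling is a best response. You spell out more carefully than the paper that $\psi^*$ and $\phi^*$ must be read as constant maps over $\Gamma$, but this is the same argument made explicit, not a different one.
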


\begin{proof} Since the sender does not provide any viable information about the state-to-be-estimated and so $y$ and $x$ are statistically independent, the best policy of the receiver and the malicious agent is to ignore the message $y$. Similarly, if the receiver and the malicious agent are ignoring the sender's message, its cost function does not change by the choice of its policy (i.e., the conditional density function for selecting its message) and so it might as well not provide any viable information.
\end{proof}


At these equilibria, all the players purely randomize and disregard all the available information to them. We are not interested in any of these equilibria. In the next theorem, we construct a nontrivial equilibrium for the privacy game.

\begin{theorem} \label{tho:1} For any $\delta\in\mathbb{R}_{\geq 0}$, there exists an equilibrium $(\psi^*,\phi^*,\gamma^*)\in\Psi^\Gamma\times \Phi^\Gamma\times \Gamma$ in which
\begin{enumerate}
\item The receiver employs the policy $\psi^*$ such that
\begin{align*}
\mathbb{P}\left\{\hat{x}=\matrix{cc}{V_{xy} & V_{xz}}\matrix{cc}{V_{yy} & V_{yz} \\ V_{zy} & V_{zz}}^{-1}\matrix{c}{y \\ z}\right\}=1;
\end{align*}
\item The malicious agent employs the policy $\phi^*$ such that
\begin{align*}
\mathbb{P}\left\{\hat{w}=\matrix{cc}{V_{wy} & V_{wz}}\matrix{cc}{V_{yy} & V_{yz} \\ V_{zy} & V_{zz}}^{-1}\matrix{c}{y \\ z}\right\}=1;
\end{align*}
\item The sender employs the policy $\gamma^*$ such that
\begin{align*}
y=K_x^* x+K_w^* w+v,
\end{align*}
where
\begin{align*}
\matrix{c}{K_x^{*\top} \\ K_w^{*\top} }\in\hspace{-.11in}\argmin_{K^\top\hspace{-.03in}\in\mathbb{R}^{(n_x+n_y)\times n_y}}\hspace{-.2in} & \,\, \trace\left( K \Xi\matrix{cc}{-I & 0 \\ 0 & \delta I}\Xi K^\top\right), \\
\mathrm{s.t.} \hspace{.1in} & \,\, K \Xi K^\top\leq I,
\end{align*}
with
\begin{align*}
\Xi&=\matrix{cc}{V_{xx} & V_{xw} \\ V_{wx} & V_{ww} }-
\matrix{c}{V_{xz} \\ V_{wz}}^\top V_{zz}^{-1}\matrix{c}{V_{xz} \\ V_{wz}},
\end{align*}
and $v\in\mathbb{R}^n$ is a Gaussian random variable independent of $x,w,z$ with zero mean and variance
$$
V_{vv}=I-\matrix{cc}{K_x^{*} & K_w^{*} }\Xi\matrix{c}{K_x^{*\top} \\ K_w^{*\top} }.
$$
\end{enumerate}
Furthermore, for all $\kappa\in\mathbb{R}^{n_y\times n_y}$ such that $\det(\kappa)\neq 0$, $(\psi^*,\phi^*,\kappa\gamma^*)$ constitutes an equilibrium.
\end{theorem}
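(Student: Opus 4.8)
The plan is to verify the three conditions of Definition~\ref{def:eq} directly (the Definition~\ref{def:eq:coalition} form then follows by Theorem~\ref{tho:equivalence}). I would take $\psi^*$ (resp.\ $\phi^*$) to be the map sending each $\gamma$ to the conditional-mean estimator $\mathbb{E}_\gamma\{x\mid y,z\}$ (resp.\ $\mathbb{E}_\gamma\{w\mid y,z\}$), which is the a.s.-unique minimizer in~\eqref{eqn:psi*}--\eqref{eqn:phi*} for every $\gamma$, in particular for $\gamma^*$. Since for $\gamma=\gamma^*$ the tuple $(x,w,y,z)$ is an affine image of the Gaussian vector $(x,w,z,v)$, it is jointly Gaussian, so $\mathbb{E}_{\gamma^*}\{x\mid y,z\}$ and $\mathbb{E}_{\gamma^*}\{w\mid y,z\}$ are the linear regressions displayed in~(i)--(ii); and the Schur complement of $\matrix{cc}{V_{yy} & V_{yz} \\ V_{zy} & V_{zz}}$ relative to $V_{zz}$ equals $\mathrm{Cov}(y\mid z)=K^*\Xi K^{*\top}+V_{vv}=I$ by the choice of $V_{vv}$, while $V_{zz}\succ0$ by Assumption~\ref{assum:Gaussian}, so that matrix is invertible and the estimators in~(i)--(ii) are well defined. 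This settles~\eqref{eqn:psi*} and~\eqref{eqn:phi*}.

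The substantive part is~\eqref{eqn:gamma*}. With the receiver and the malicious agent pinned to these always-MMSE maps, the sender's criterion becomes $J(\gamma):=\mathbb{E}\{\|x-\mathbb{E}_\gamma\{x\mid y,z\}\|_2^2\}-\delta\,\mathbb{E}\{\|w-\mathbb{E}_\gamma\{w\mid y,z\}\|_2^2\}$, and I must show $\gamma^*$ minimizes $J$ over all $\gamma\in\Gamma$. The main obstacle is the reduction to Gaussian senders: that the infimum of $J$ over all $\gamma$ (which obey the Markov structure $z-(x,w)-y$) is attained by a linear-Gaussian policy. This is exactly the Stackelberg/MMSE-decoder optimality of affine encoders in quadratic Gaussian signaling, which I would import from~\cite{6859123} (it is what rules out information-packing nonlinear deviations). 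Granting it, restrict to $y=K_xx+K_ww+v$ with $v$ Gaussian independent of $(x,w,z)$ and write $K=\matrix{cc}{K_x & K_w}$. Because $J$ depends on $\gamma$ only through the $\sigma$-algebra $\sigma(y,z)$, it is invariant under $y\mapsto\kappa y$ for invertible $\kappa$, so one may normalize $\mathrm{Cov}(y\mid z)=K\Xi K^\top+V_{vv}=I$, i.e.\ $V_{vv}=I-K\Xi K^\top\succeq0$, equivalently $K\Xi K^\top\le I$. Under this normalization~\eqref{eqn:longeqn:proof:1} applies, and substituting $V_{xy}=V_{xx}K_x^\top+V_{xw}K_w^\top$, $V_{wy}=V_{wx}K_x^\top+V_{ww}K_w^\top$, $V_{zy}=V_{zx}K_x^\top+V_{zw}K_w^\top$ gives $V_{xy}-V_{xz}V_{zz}^{-1}V_{zy}=\matrix{cc}{I & 0}\Xi K^\top$ and $V_{wy}-V_{wz}V_{zz}^{-1}V_{zy}=\matrix{cc}{0 & I}\Xi K^\top$, so by the cyclic property of the trace the quadratic form in~\eqref{eqn:longeqn:proof:1} collapses and $J(\gamma)$ equals, up to a $K$-independent constant, $\trace\big(K\Xi\,\diag(-I,\delta I)\,\Xi K^\top\big)$. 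Thus minimizing $J$ over normalized linear-Gaussian senders is exactly the program in~(iii). Since $\Xi\succ0$ (a Schur complement of the positive-definite joint covariance of Assumption~\ref{assum:Gaussian}), the feasible set $\{K:K\Xi K^\top\le I\}=\{K:\|K\Xi^{1/2}\|\le1\}$ is compact and the objective continuous, so a minimizer $K^*$ exists; taking $V_{vv}=I-K^*\Xi K^{*\top}$ recovers $\gamma^*$, which therefore attains $\min_\gamma J(\gamma)$ and satisfies~\eqref{eqn:gamma*}. Hence $(\psi^*,\phi^*,\gamma^*)$ is an equilibrium.

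Finally, for the closing assertion fix invertible $\kappa$. Because $\sigma(\kappa y,z)=\sigma(y,z)$, the estimators $\mathbb{E}\{x\mid\kappa y,z\}$ and $\mathbb{E}\{w\mid\kappa y,z\}$ coincide as random variables with those under $\gamma^*$, so $\psi^*(\kappa\gamma^*)$ and $\phi^*(\kappa\gamma^*)$ are again linear regressions; moreover the formulas in~(i)--(ii), evaluated with the moments of the scaled message ($V_{x,\kappa y}=V_{xy}\kappa^\top$, $\mathrm{Cov}(\kappa y)=\kappa V_{yy}\kappa^\top$, $V_{\kappa y,z}=\kappa V_{yz}$), reproduce those estimators with $y$ replaced by $\kappa y$, so~(i)--(ii) hold verbatim for $\kappa\gamma^*$. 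Since $J(\kappa\gamma^*)=J(\gamma^*)=\min_\gamma J(\gamma)$ by the same $\sigma$-algebra invariance, $\kappa\gamma^*$ remains a sender best response; hence $(\psi^*,\phi^*,\kappa\gamma^*)$ is an equilibrium.
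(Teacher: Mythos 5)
Your verification of \eqref{eqn:psi*}--\eqref{eqn:phi*} at $\gamma^*$, your computation for linear senders (collapsing \eqref{eqn:longeqn:proof:1} to $\trace(K\Xi\diag(-I,\delta I)\Xi K^\top)$ under the normalization $K\Xi K^\top+V_{vv}=I$), and your treatment of the $\kappa\gamma^*$ rescaling all agree with the paper. The genuine gap is the step you yourself flag as ``the main obstacle'' and then import as a black box: the claim that the infimum of the sender's cost over \emph{all} $\gamma\in\Gamma$ is attained by a linear--Gaussian policy. As you have set the problem up --- with $\psi^*(\gamma)$ and $\phi^*(\gamma)$ equal to the exact conditional means $\mathbb{E}_\gamma\{x\mid y,z\}$ and $\mathbb{E}_\gamma\{w\mid y,z\}$ for \emph{every} deviation $\gamma$ --- that claim is much stronger than what \cite{6859123} or \cite{FarokhiTAC2014Submitted} supplies: against a fully Bayesian decoder the sender's cost is a nonlinear functional of the whole law of $(x,w,y,z)$, not of its second moments, and optimality of affine encoders in such Witsenhausen-type problems is precisely the kind of statement that is unproven (or false) in general. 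So the heart of \eqref{eqn:gamma*} is missing from your argument.

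The paper closes this gap by a different choice of the \emph{off-equilibrium} part of $\psi^*$ and $\phi^*$, which is free because \eqref{eqn:psi*}--\eqref{eqn:phi*} only constrain the estimators at $\gamma^*$: it commits the receiver and the malicious agent to the \emph{linear} least-squares formulas of items (i)--(ii) for every $\gamma$ (these coincide with the conditional means at $\gamma^*$ by joint Gaussianity, so they remain best responses there). With that commitment the sender's cost depends on $\gamma$ only through the cross-covariances $V_{xy},V_{wy},V_{zy}$ via \eqref{eqn:longeqn:proof:1}; the set of cross-covariances achievable by \emph{any} $\gamma$ under the normalization $V_{yy}-V_{yz}V_{zz}^{-1}V_{zy}=I$ is exactly characterized by the Schur-complement inequality \eqref{eqn:inequality:2}; and every feasible point of the resulting finite-dimensional program \eqref{eqn:optim:1} is realized by an affine policy $y=K_xx+K_ww+K_zz+v$, with $K_z=0$ attainable without loss. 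This turns the reduction to affine encoders from a deep claim into an elementary covariance-realizability argument. You should either adopt this device or supply an actual proof of affine-encoder optimality against the Bayesian decoder; as written, your proposal does not establish \eqref{eqn:gamma*}.
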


\begin{proof} First, we fix $\psi^*$ and $\phi^*$ as in the statement of the theorem and show that the presented $\gamma^*$ is the best response of the sender. Let us assume that $V_{yy}-V_{yz}V_{zz}^{-1}V_{zy}=I$. This is without loss of generality as the receiver can always scale the communicated message to ensure this is satisfied. 
 To show this, assume that $V_{yy}-V_{yz}V_{zz}^{-1}V_{zy}=D\in\mathcal{S}_{++}^{n_y}$, where $\mathcal{S}_{++}^{n_y}$ denotes the set of positive-definite matrix in $\mathbb{R}^{n_y\times n_y}$. Note that $V_{yy}-V_{yz}V_{zz}^{-1}V_{zy}\in\mathcal{S}_{++}^{n_y}$ follows from Schur complement and Assumption~\ref{assum:Gaussian}. Now, the receiver can simply use $\tilde{y}=D^{-1/2}y$, which gives $V_{\tilde{y}\tilde{y}}-V_{\tilde{y}z}V_{zz}^{-1}V_{z\tilde{y}}=D^{-1/2}DD^{-1/2}=I$. (Since the transformation between $y$ and $\tilde{y}$ is one-to-one, it does not create any information loss.) Under this assumption, using the matrix inversion lemma, we get
\begin{align*}
\matrix{cc}{V_{yy} & V_{yz} \\ V_{zy} & V_{zz}}^{-1}\hspace{-.16in}
&=\hspace{-.04in}\matrix{cc}{I & -V_{yz}V_{zz}^{-1} \\ -V_{zz}^{-1}V_{zy} & V_{zz}^{-1}V_{zy}V_{yz}V_{zz}^{-1}+V_{zz}^{-1} }\hspace{-.03in}.
\end{align*}
Note that
\begin{align*}
&\mathbb{E}\{\|x-\hat{x}\|_2\}\\
&=\mathbb{E}\bigg\{\bigg\|x-\matrix{cc}{V_{xy} & V_{xz}}\matrix{cc}{V_{yy} & V_{yz} \\ V_{zy} & V_{zz}}^{-1}\matrix{c}{y \\ z}\bigg\|_2\bigg\}\\
&=\trace\bigg(V_{xx}\hspace{-.03in}-\hspace{-.03in}\matrix{cc}{V_{xy} & V_{xz}}\hspace{-.04in}\matrix{cc}{V_{yy} & V_{yz} \\ V_{zy} & V_{zz}}^{-1}\hspace{-.04in}\matrix{c}{V_{yx} \\ V_{zx}}\bigg)\\
&=\trace(V_{xx}-V_{xy}V_{yx}+V_{xy}V_{yz}V_{zz}^{-1}V_{zx}+V_{xz}V_{zz}^{-1}V_{zy}V_{yx}
\\&\hspace{.1in}-V_{xz} (V_{zz}^{-1}V_{zy}V_{yz}V_{zz}^{-1}+V_{zz}^{-1})V_{zx})\\
&=\trace(V_{xx}-V_{xz}V_{zz}^{-1}V_{zx})\hspace{-.03in}+\hspace{-.03in}\trace(-V_{yx}V_{xy}+V_{yz}V_{zz}^{-1}V_{zx}V_{xy}
\\&\hspace{.1in}+V_{yx}V_{xz}V_{zz}^{-1}V_{zy}-V_{yz}V_{zz}^{-1}V_{zx}V_{xz} V_{zz}^{-1}V_{zy}).
\end{align*}
Similarly, we have
\begin{align*}
&\mathbb{E}\{\|w-\hat{w}\|_2\}\\
&=\mathbb{E}\bigg\{\bigg\|w-\matrix{cc}{V_{wy} & V_{wz}}\matrix{cc}{V_{yy} & V_{yz} \\ V_{zy} & V_{zz}}^{-1}\matrix{c}{y \\ z}\bigg\|_2\bigg\}\\
&=\trace(V_{ww}-V_{wz}V_{zz}^{-1}V_{zw})+\trace(-V_{yw}V_{wy}\\
&\hspace{.1in}+V_{yz}V_{zz}^{-1}V_{zw}V_{wy}+V_{yw}V_{wz}V_{zz}^{-1}V_{zy}\\
&\hspace{.1in}-V_{yz}V_{zz}^{-1}V_{zw}V_{wz} V_{zz}^{-1}V_{zy}).
\end{align*}
Therefore, we get the equality in~\eqref{eqn:longeqn:proof:1} on top of the page.
Notice that, in this cost function, we have
\begin{align*}
&\matrix{ccc}{ 
-I & 0 & V_{xz}V_{zz}^{-1} \\
0 & \delta I &  \delta V_{wz}V_{zz}^{-1} \\
V_{zz}^{-1} V_{zx}& \delta V_{zz}^{-1}V_{zw} & -V_{zz}^{-1}(V_{zx}V_{xz}-\delta V_{zw}V_{wz} )V_{zz}^{-1}
}\\
&=\hspace{-.04in}
\matrix{ccc}{I & 0 & -V_{xz}V_{zz}^{-1} \\ 0 & I & -V_{wz}V_{zz}^{-1}}^{\hspace{-.03in}\top}
\hspace{-.06in}\matrix{cc}{-I & 0 \\ 0 & \delta I}\hspace{-.06in}
\matrix{ccc}{I & 0 & -V_{xz}V_{zz}^{-1} \\ 0 & I & -V_{wz}V_{zz}^{-1}}\hspace{-.03in}.
\end{align*}
Let us call this matrix $Z$. Moreover, we have
\begin{align} \label{eqn:inequality:1}
\mathbb{E}\left\{ \matrix{c}{y \\ x \\ w \\ z} \matrix{c}{y \\ x \\ w \\ z}^\top\right\}
&\hspace{-.03in}=\hspace{-.03in}\matrix{cccc}{
V_{yy} & V_{yx} & V_{yw} &  V_{yz} \\
V_{xy} & V_{xx} & V_{xw} &  V_{xz} \\
V_{wy} & V_{wx} & V_{ww} &  V_{wz} \\
V_{zy} & V_{zx} & V_{zw} &  V_{zz}
 } \hspace{-.03in}\geq\hspace{-.03in} 0.
\end{align}
Using Schur complement, the inequality in~\eqref{eqn:inequality:1} can be rewritten as
\begin{align} \label{eqn:inequality:2}
V_{yy}-
\matrix{c}{V_{xy} \\ V_{wy} \\ V_{zy}}^\top
\matrix{cccc}{
V_{xx} & V_{xw} &  V_{xz} \\
V_{wx} & V_{ww} &  V_{wz} \\
V_{zx} & V_{zw} &  V_{zz} }^{-1}
\matrix{c}{V_{xy} \\ V_{wy} \\ V_{zy}}
\geq 0.
\end{align}
Recalling that $V_{yy}-V_{yz}V_{zz}^{-1}V_{zy}=I$, the inequality in~\eqref{eqn:inequality:2} can be posed as
\begin{align} \label{eqn:inequality:3}
I-&
\matrix{c}{V_{xy} \\ V_{wy} \\ V_{zy}}^\top
\left(
\matrix{cccc}{
0 & 0 & 0\\
0 & 0 & 0 \\
0 & 0 & V_{zz}^{-1} }\right.\nonumber\\&+\left.
\matrix{cccc}{
V_{xx} & V_{xw} &  V_{xz} \\
V_{wx} & V_{ww} &  V_{wz} \\
V_{zx} & V_{zw} &  V_{zz} }^{-1}
\right)
\matrix{c}{V_{xy} \\ V_{wy} \\ V_{zy}}
\geq 0.
\end{align}
Following the same argument as in the proof of Theorem~2.2 in~\cite{FarokhiTAC2014Submitted}, we can show that the inequality in~\eqref{eqn:inequality:3} holds if and only if
\begin{align*}
\matrix{c}{V_{xy} \\ V_{wy} \\ V_{zy}}^\top
Q
\matrix{c}{V_{xy} \\ V_{wy} \\ V_{zy}}
\leq I,
\end{align*}
where
\begin{align*}
Q=&
\matrix{ccc}{
I & 0 & -V_{xz}V_{zz}^{-1} \\
0 & I & -V_{wz}V_{zz}^{-1}}^\top
\\&\times
\left(
\matrix{cc}{V_{xx} & V_{xw} \\ V_{wx} & V_{ww} }-
\matrix{c}{V_{xz} \\ V_{wz}}V_{zz}^{-1}\matrix{c}{V_{xz} \\ V_{wz}}
\right)^{-1}
\\&\times
\matrix{ccc}{
I & 0 & -V_{xz}V_{zz}^{-1} \\
0 & I & -V_{wz}V_{zz}^{-1}}.
\end{align*}
Thus, the best response of the sender is given by solving the optimization problem
\begin{subequations} \label{eqn:optim:1}
\begin{align}
\min_{V_{xy},V_{wy},V_{zy}} & \trace\left( \matrix{c}{V_{xy} \\ V_{wy} \\ V_{zy}}^\top Z \matrix{c}{V_{xy} \\ V_{wy} \\ V_{zy}}\right) \\
\mathrm{s.t.} \hspace{.2in} & \matrix{c}{V_{xy} \\ V_{wy} \\ V_{zy}}^\top Q \matrix{c}{V_{xy} \\ V_{wy} \\ V_{zy}}\leq I.
\end{align}
\end{subequations}
It remains to show that an affine policy can be optimal in this sense. Let
$$
y=K_x x+K_w w+K_z z+v,
$$
where $v$ is a zero-mean Gaussian random variable, which is independent of $x,w,z$. Simple calculations show that
\begin{align*}
\matrix{c}{V_{xy} \\ V_{wy} \\ V_{zy}}
=\matrix{ccc}{
V_{xx} & V_{xw} & V_{xz} \\
V_{wx} & V_{ww} & V_{wz} \\
V_{zx} & V_{zw} & V_{zz}}
\matrix{c}{K_x^\top \\ K_w^\top \\ K_z^\top },
\end{align*}
and, as a result,
\begin{align*}
\matrix{c}{K_x^\top \\ K_w^\top \\ K_z^\top }
=\matrix{ccc}{
V_{xx} & V_{xw} & V_{xz} \\
V_{wx} & V_{ww} & V_{wz} \\
V_{zx} & V_{zw} & V_{zz}}^{-1}
\matrix{c}{V_{xy} \\ V_{wy} \\ V_{zy}},
\end{align*}
where the inverse exists thanks to Assumption~\ref{assum:Gaussian}. Introducing the change of variable 
$$
\xi=\matrix{ccc}{
V_{xx} & V_{xw} & V_{xz} \\
V_{wx} & V_{ww} & V_{wz} \\
V_{zx} & V_{zw} & V_{zz}}^{-1}
\matrix{c}{V_{xy} \\ V_{wy} \\ V_{zy}}
$$
in~\eqref{eqn:optim:1} yields the equivalent optimization problem
\begin{subequations} \label{eqn:optim:2}
\begin{align}
\mathcal{X}=\argmin_{\xi\in\mathbb{R}^{(n_x+n_y+n_z)\times n_y}} & \,\, \trace\left( \xi^\top Z' \xi\right) \\
\mathrm{s.t.} \hspace{.4in} & \,\, \xi^\top Q' \xi\leq I,
\end{align}
\end{subequations}
where
\begin{align*}
Z'
&=\matrix{ccc}{
V_{xx} & V_{xw} & V_{xz} \\
V_{wx} & V_{ww} & V_{wz} \\
V_{zx} & V_{zw} & V_{zz}}Z\matrix{ccc}{
V_{xx} & V_{xw} & V_{xz} \\
V_{wx} & V_{ww} & V_{wz} \\
V_{zx} & V_{zw} & V_{zz}}\\
&=
W^\top \matrix{cc}{-I & 0 \\ 0 & \delta I}
W
\end{align*}
and
\begin{align*}
Q'
&=\matrix{ccc}{
V_{xx} & V_{xw} & V_{xz} \\
V_{wx} & V_{ww} & V_{wz} \\
V_{zx} & V_{zw} & V_{zz}}Q\matrix{ccc}{
V_{xx} & V_{xw} & V_{xz} \\
V_{wx} & V_{ww} & V_{wz} \\
V_{zx} & V_{zw} & V_{zz}}\\
&=
W^\top \hspace{-.05in}\left(
\matrix{cc}{V_{xx} & V_{xw} \\ V_{wx} & V_{ww} }-
\matrix{c}{V_{xz} \\ V_{wz}}V_{zz}^{-1}\matrix{c}{V_{xz} \\ V_{wz}}
\right)^{\hspace{-.03in}-1}\hspace{-.07in} W
\end{align*}
with
\begin{align*}
W=\matrix{ccc}{
V_{xx}-V_{xz}V_{zz}^{-1}V_{zx} & V_{xw}-V_{xz}V_{zz}^{-1}V_{zw} & 0\\
V_{wx}-V_{wz}V_{zz}^{-1}V_{zx} & V_{ww}-V_{wz}V_{zz}^{-1}V_{zw} & 0}.
\end{align*}
Let us define
\begin{subequations} 
\begin{align}
\mathcal{X}'=\argmin_{\xi'\in\mathbb{R}{(n_x+n_y)\times n_y}} & \,\, \trace\left( \xi^{'\top} Z'' \xi'\right) \\
\mathrm{s.t.} \hspace{.34in} & \,\, \xi^{'\top} Q'' \xi'\leq I,
\end{align}
\end{subequations}
where
\begin{align*}
Z''
&=
\matrix{cc}{
V_{xx}-V_{xz}V_{zz}^{-1}V_{zx} & V_{xw}-V_{xz}V_{zz}^{-1}V_{zw} \\
V_{wx}-V_{wz}V_{zz}^{-1}V_{zx} & V_{ww}-V_{wz}V_{zz}^{-1}V_{zw}
}\\&\hspace{.2in}\times
\matrix{cc}{-I & 0 \\ 0 & \delta I}
\\&\hspace{.2in}\times
\matrix{cc}{
V_{xx}-V_{xz}V_{zz}^{-1}V_{zx} & V_{xw}-V_{xz}V_{zz}^{-1}V_{zw}\\
V_{wx}-V_{wz}V_{zz}^{-1}V_{zx} & V_{ww}-V_{wz}V_{zz}^{-1}V_{zw} 
}
\end{align*}
and
\begin{align*}
Q''
&=
\matrix{cc}{
V_{xx}-V_{xz}V_{zz}^{-1}V_{zx} & V_{xw}-V_{xz}V_{zz}^{-1}V_{zw} \\
V_{wx}-V_{wz}V_{zz}^{-1}V_{zx} & V_{ww}-V_{wz}V_{zz}^{-1}V_{zw} 
}\\&\hspace{.2in}\times
\left(
\matrix{cc}{V_{xx} & V_{xw} \\ V_{wx} & V_{ww} }-
\matrix{c}{V_{xz} \\ V_{wz}}V_{zz}^{-1}\matrix{c}{V_{xz} \\ V_{wz}}
\right)^{-1}
\\ &\hspace{.2in}\times
\matrix{cc}{
V_{xx}-V_{xz}V_{zz}^{-1}V_{zx} & V_{xw}-V_{xz}V_{zz}^{-1}V_{zw} \\
V_{wx}-V_{wz}V_{zz}^{-1}V_{zx} & V_{ww}-V_{wz}V_{zz}^{-1}V_{zw} 
}\\
&=
\matrix{cc}{V_{xx} & V_{xw} \\ V_{wx} & V_{ww} }-
\matrix{c}{V_{xz} \\ V_{wz}}V_{zz}^{-1}\matrix{c}{V_{xz} \\ V_{wz}}.
\end{align*}
Evidently, 
\begin{align*}
\left\{\matrix{c}{\xi' \\ 0_{n_z\times n_y}} \bigg| \xi'\in\mathcal{X}'  \right\}\subseteq \mathcal{X},
\end{align*}
because
\begin{align*}
\xi^{'\top} Z'' \xi'&=\matrix{c}{\xi' \\ \xi''}Z'\matrix{c}{\xi' \\ \xi''},\\
\xi^{'\top} Q'' \xi'&=\matrix{c}{\xi' \\ \xi''}Q'\matrix{c}{\xi' \\ \xi''},
\end{align*}
for all $\xi''\in\mathbb{R}^{n_z\times n_y}$. Hence, there is an equilibrium for which $K_z=0$ because $\mathcal{X}'\neq \emptyset$ (as its cost function is continuous and the feasible set is compact). The rest of the proof follows easily from the fact that once we use the sender's strategy in the statement of theorem (which is an affine function in the realization of the random variables $x,w$), the best estimator that the receiver and the malicious agent can utilize is the least mean square estimator in the statement of the theorem.
\end{proof}

\begin{remark} Note that the policies $\psi^*$ and $\phi^*$ at the equilibrium constructed in Theorem~\ref{tho:1} are affine in the realizations of the random variables $y$ and $z$. However, these policies are overall not affine as $V_{xy}$ , $V_{xz}$, $V_{yz}$, $V_{yy}$, and $V_{zz}$ are all functions of the distributions of the random variables $z$ and $y$ (albeit not their realizations). This also shows that $\psi^*$ and $\phi^*$, at the equilibrium, are functions of $\gamma$ (i.e., the policy of the sender).
\end{remark}

Solving the optimization problem in Theorem~\ref{tho:1} is a tedious task even numerically as the problem is not convex. Note that since we are interested in finding an equilibrium, approximations are not particularly useful (since, at an approximate solution, an agent might have an incentive to unilaterally change its policy and, thus, the equilibrium breaks down). In such cases, the definition of the equilibrium should be adapted accordingly. This problem can be an avenue for future research. In the next corollary, we focus on the case where we can solve this optimization problem explicitly.

\begin{corollary} \label{cor:1} Let $n_y=1$. For any $\delta\in\mathbb{R}_{\geq 0}$, there exists an equilibrium $(\psi^*,\phi^*,\gamma^*)\in\Psi^\Gamma\times \Phi^\Gamma\times \Gamma$ in which
\begin{enumerate}
\item The receiver employs the policy $\psi^*$ such that
\begin{align*}
\mathbb{P}\left\{\hat{x}=\matrix{cc}{V_{xy} & V_{xz}}\matrix{cc}{V_{yy} & V_{yz} \\ V_{zy} & V_{zz}}^{-1}\matrix{c}{y \\ z}\right\}=1;
\end{align*}
\item The malicious agent employs the policy $\phi^*$ such that
\begin{align*}
\mathbb{P}\left\{\hat{w}=\matrix{cc}{V_{wy} & V_{wz}}\matrix{cc}{V_{yy} & V_{yz} \\ V_{zy} & V_{zz}}^{-1}\matrix{c}{y \\ z}\right\}=1;
\end{align*}
\item The sender employs the policy $\gamma^*$ such that
\begin{align*}
\mathbb{P}\{y=K_x^* x+K_w^* w\}=1,
\end{align*}
where 
\begin{align*}
\matrix{c}{K_x \\ K_w}=\Xi^{-1/2}\xi
\end{align*}
with $\xi$ being the normalized eigenvector corresponding to the smallest eigenvalue of 
$\Xi^{1/2}\diag(-I,\delta I)\Xi^{1/2}$.
\end{enumerate}
Furthermore, for all $\kappa\neq 0$, $(\psi^*,\phi^*,\kappa\gamma^*)$ constitutes an equilibrium.
\end{corollary}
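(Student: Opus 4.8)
The plan is to specialize the characterization in Theorem~\ref{tho:1} to the scalar-message case and then solve the resulting quadratically constrained quadratic program in closed form. By Theorem~\ref{tho:1} it suffices to fix $\psi^*$ and $\phi^*$ as the least-mean-square estimators displayed in items \textit{(i)}--\textit{(ii)} and to show that, against these, the sender's best response is the deterministic affine policy of item \textit{(iii)}; the estimator policies and the invariance under the nonzero scaling $\kappa$ are then inherited verbatim from Theorem~\ref{tho:1}. Thus the only work is the sender's optimization
\[
\min_{K^\top\in\mathbb{R}^{n_x+n_w}}\ \trace\!\left(K\Xi\diag(-I,\delta I)\Xi K^\top\right)\quad\text{s.t.}\quad K\Xi K^\top\le I,
\]
which, when $n_y=1$, has $K^\top=\matrix{c}{K_x^\top \\ K_w^\top}\in\mathbb{R}^{n_x+n_w}$ a column vector, so both the objective and the constraint are scalars and the trace is vacuous.

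First I would record that $\Xi\succ 0$: it is the Schur complement of $V_{zz}$ in the joint covariance of $(x,w,z)$, which is positive definite by Assumption~\ref{assum:Gaussian}. Hence $\Xi^{1/2}$ and $\Xi^{-1/2}$ are well defined, and the change of variables $\xi=\Xi^{1/2}\matrix{c}{K_x^\top \\ K_w^\top}$, equivalently $\matrix{c}{K_x^\top \\ K_w^\top}=\Xi^{-1/2}\xi$, is a bijection of $\mathbb{R}^{n_x+n_w}$. Under it the objective becomes $\xi^\top\Xi^{1/2}\diag(-I,\delta I)\Xi^{1/2}\xi$ and the constraint becomes $\xi^\top\xi\le 1$, so the sender's problem reduces to minimizing $\xi^\top M\xi$ over the closed unit ball, where $M:=\Xi^{1/2}\diag(-I,\delta I)\Xi^{1/2}$.

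Next I would solve this reduced problem. Since $M$ is congruent (via the invertible matrix $\Xi^{1/2}$) to $\diag(-I_{n_x},\delta I_{n_w})$, Sylvester's law of inertia gives that $M$ has exactly $n_x\ge 1$ negative eigenvalues; in particular $\lambda_{\min}(M)<0$. For any $\xi$ with $\|\xi\|\le 1$, writing $\xi=tu$ with $\|u\|=1$ and $t\in[0,1]$ yields $\xi^\top M\xi=t^2 u^\top M u\ge t^2\lambda_{\min}(M)\ge\lambda_{\min}(M)$, with equality precisely when $t=1$ and $u$ is a unit eigenvector of $M$ for $\lambda_{\min}(M)$. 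Hence a minimizer is $\xi^*$, a normalized eigenvector of $\Xi^{1/2}\diag(-I,\delta I)\Xi^{1/2}$ associated with its smallest eigenvalue, and back-substitution gives $\matrix{c}{K_x^* \\ K_w^*}=\Xi^{-1/2}\xi^*$, as claimed. Finally I would observe that the constraint is active, $\|\xi^*\|=1$, so the noise variance prescribed by Theorem~\ref{tho:1} is $V_{vv}=I-\matrix{cc}{K_x^* & K_w^*}\Xi\matrix{c}{K_x^{*\top} \\ K_w^{*\top}}=1-\|\xi^*\|^2=0$; thus $v=0$ and $y=K_x^*x+K_w^*w$ almost surely, which is item \textit{(iii)}.

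I do not anticipate a genuine obstacle: the content is entirely a specialization of Theorem~\ref{tho:1}, and the only points needing care are (a) checking $\Xi\succ 0$ so the whitening change of variables is legitimate, and (b) the inertia argument ensuring $\lambda_{\min}(M)<0$, which is exactly what forces the constraint to be active and hence makes the equilibrium informative (were $M\succeq 0$ one would recover only $K=0$, i.e.\ a babbling equilibrium).
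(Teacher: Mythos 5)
Your proposal is correct and follows the same route as the paper: specialize the sender's optimization from Theorem~\ref{tho:1} to $n_y=1$ and solve the resulting rank-one trace-constrained problem via the whitening $\xi=\Xi^{1/2}K^\top$ and the smallest eigenvalue of $\Xi^{1/2}\diag(-I,\delta I)\Xi^{1/2}$. The only difference is that the paper outsources this last step to Lemma~A.1 of the cited reference, whereas you prove it inline (including the inertia argument showing $\lambda_{\min}<0$, which forces the constraint to be active and hence $V_{vv}=0$), which is exactly the content that citation carries.
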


\begin{proof} The proof follows from applying Lemma~A.1 in~\cite{FarokhiTAC2014Submitted} to the results of Theorem~\ref{tho:1}.
\end{proof}

\begin{remark} Corollary~\ref{cor:1} shows the maybe surprising fact that the privacy-preserving policy of the sender at the calculated family of equilibria is deterministic if the message from the sender is scalar, i.e., $n_y=1$. It should be noted that the equilibrium in this game is not necessarily unique. Therefore, there might exist an equilibrium (in addition to the babbling equilibria; see Theorem~\ref{tho:babbling}) that consists of randomization. Unfortunately, for the rest of the cases, there is no systematic way to construct an explicit policy for the sender at the equilibrium. 
\end{remark}

\section{Numerical Example} \label{sec:example}
Let us consider the case where there is no side information. Set $n_x=n_w=n_y=1$ and fix 
$$
\matrix{cc}{V_{xx} & V_{xw} \\ V_{wx} & V_{ww} }=\matrix{cc}{1 & 0.8 \\ 0.8 & 1}. 
$$ 
Fig.~\ref{fig:esterror} shows the estimation error of the receiver and the malicious agent at the calculated equilibrium with $\kappa=1$ as a function of the privacy ratio $\delta$. Indeed, as the privacy ratio $\delta$ increases, the estimation error of the malicious agent increases and, finally, converges to $V_{ww}$. Therefore, the sender keeps $w$ completely private. However, this happens at the price of a larger estimation error of the state at the receiver.

\begin{figure}
\centering
\includegraphics[width=1.0\linewidth]{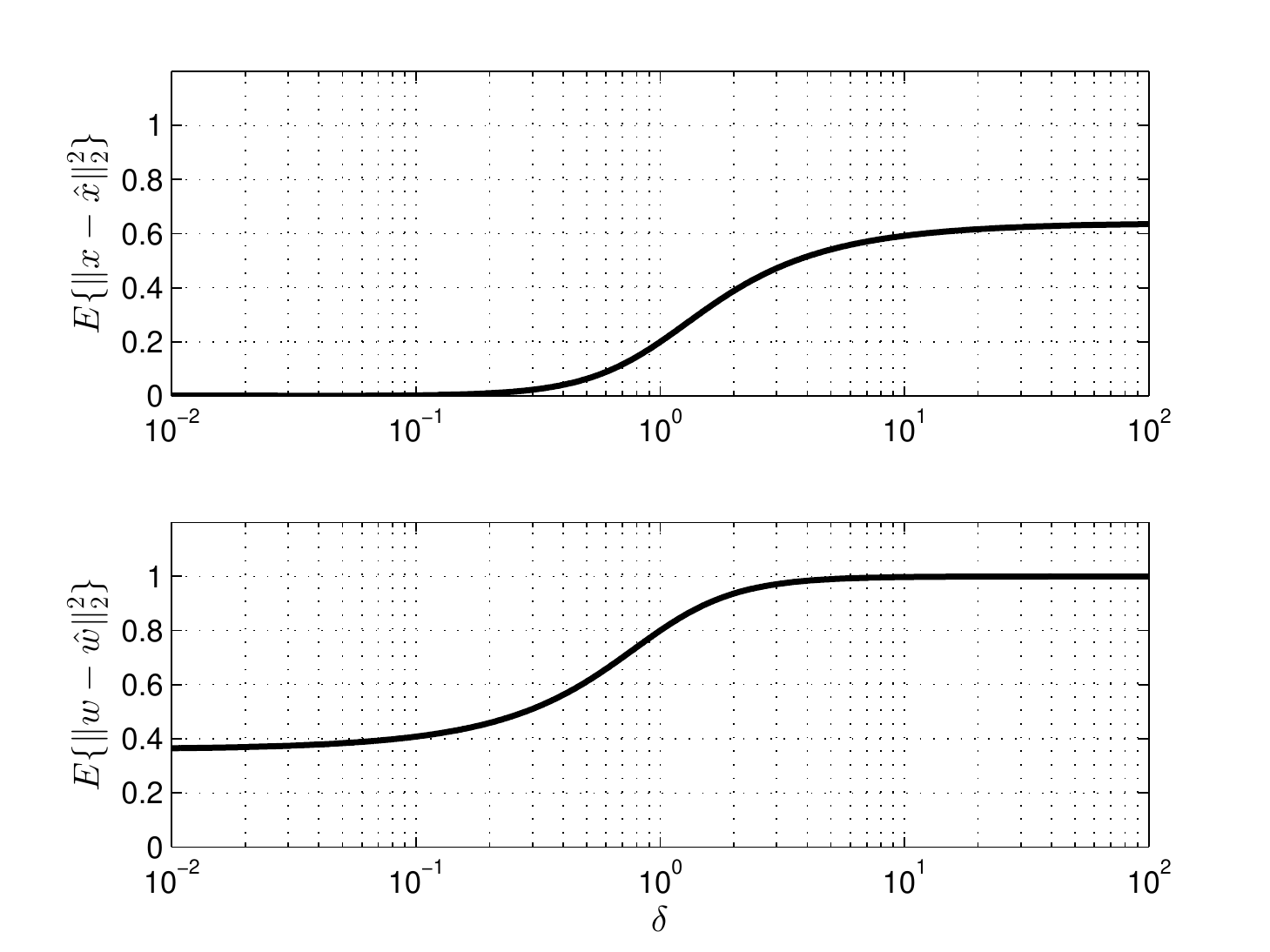}
\caption{\label{fig:esterror} Estimation error of the receiver and the malicious agent as a function of the privacy ratio. }
\end{figure}

\begin{figure}[t]
\centering
\includegraphics[width=1.0\linewidth]{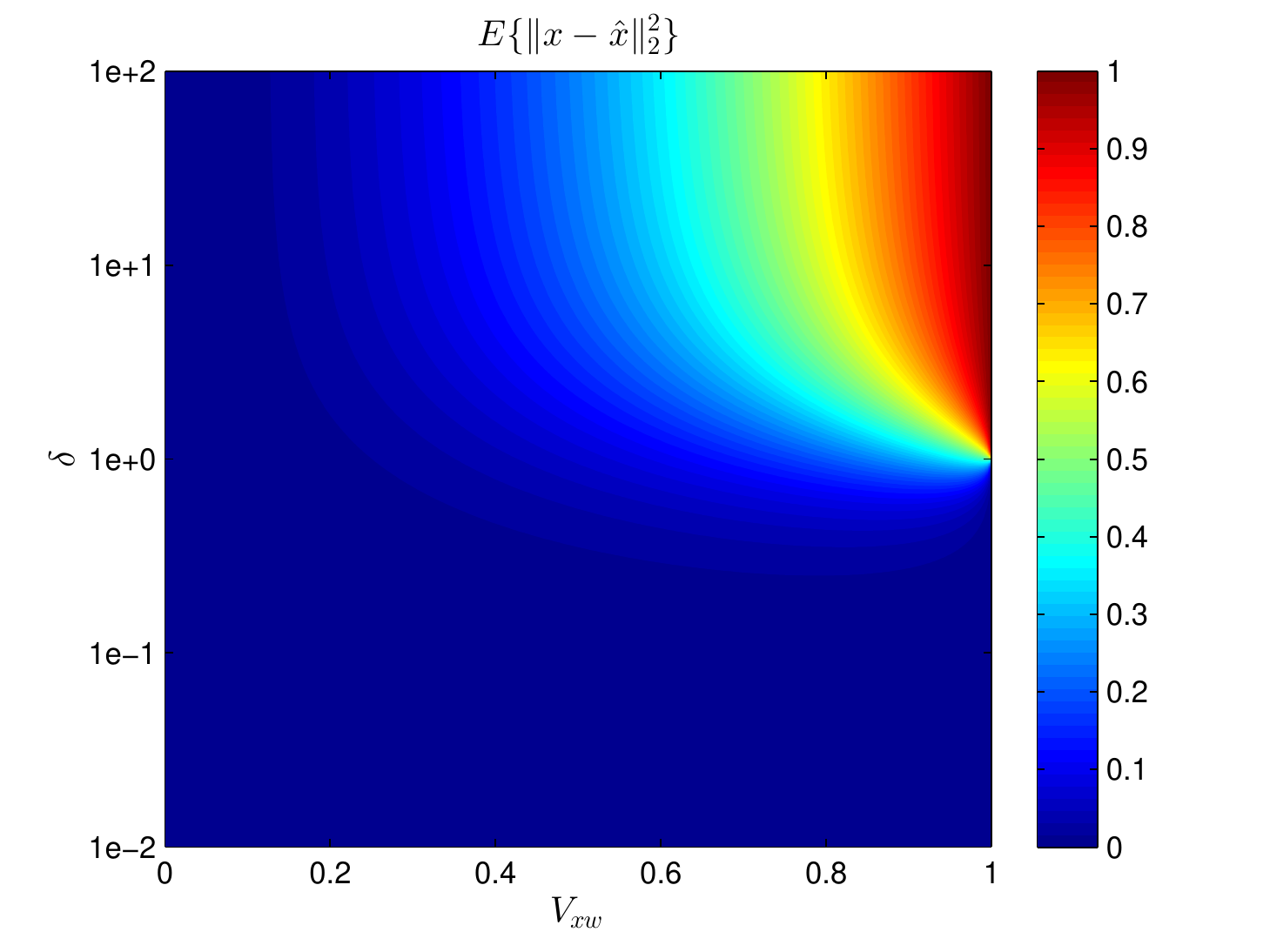}
\caption{\label{fig:contour} Estimation error of the receiver as a function of the privacy ratio and the correlation between the state and the private information.} 
\end{figure}


Fig.~\ref{fig:contour} illustrates the estimation error at the receiver $\mathbb{E}\{\|x-\hat{x}\|_2^2\}$ as a function of the privacy ratio $\delta$ and the correlation between the state and the private information captured by $V_{xw}$. For a fixed privacy ratio, as we increase $V_{xw}$ the quality of the estimation degrades since if the correlation is high an accurate measurement of $x$ conveys a lot of information about the realization of $w$. 

\section{Conclusions and Future Work} \label{sec:conclusions}
A game-theoretic framework for modelling and analysing the effect of privacy in strategic communication between various entities is developed. A well-informed sender wants to provide an accurate measurement of the state-to-be-estimated to a receiver. However, the sender also desires to keep its private information private from a malicious agent. The private information and the state are assumed to be correlated and, hence, an accurate measurement of the state  also shines some light on the realization of the private information. The trade-off between the desire to keep privacy and transmitting a useful measurement to the receiver is captured using the privacy ratio. For scalar messages, the policy of the sender, at the calculated equilibrium, is deterministic. A viable avenue for future work is to extend the setup to more than one sender.

\bibliographystyle{ieeetr}
\bibliography{ref}
\end{document}